\newtheorem{theorem}{Theorem}
\theoremstyle{definition}
\newtheorem{definition}[theorem]{Definition}
\begin{document}

\title{Seasonality effects on Dengue: basic reproduction number,\\
sensitivity analysis and optimal control\thanks{This is a preprint
of a paper whose final and definite form is published in
\emph{Mathematical Methods in the Applied Sciences}, ISSN 0170-4214
(see {\tt http://dx.doi.org/10.1002/mma.3319}).
Paper submitted 22/July/2014; revised 11/Sept/2014;
accepted for publication 12/Sept/2014.}}

\author{Helena Sofia Rodrigues$^{1, 2}$\\
{\tt \small sofiarodrigues@esce.ipvc.pt}
\and M. Teresa T. Monteiro$^3$\\
{\tt \small tm@dps.uminho.pt}
\and Delfim F. M. Torres$^1$\\
{\tt \small delfim@ua.pt}}


\date{$^1$\mbox{Center for Research and Development in Mathematics and Applications (CIDMA)},
Department of Mathematics, University of Aveiro, 3810--193 Aveiro, Portugal\\[0.3cm]
$^2$School of Business Studies, Viana do Castelo Polytechnic Institute,\\
Avenida Miguel Dantas, 4930--678 Valen\c{c}a, Portugal\\[0.3cm]
$^3$Algoritmi R\&D Center, Department of Production and Systems,\\
University of Minho, Campus de Gualtar, 4710--057 Braga, Portugal}

\maketitle


\begin{abstract}
Dengue is a vector-borne disease transmitted from an infected human to
an \emph{Aedes} mosquito, during a blood meal. Dengue is still a major
public health problem. A model for the disease transmission is presented,
composed by human and mosquitoes compartments. The aim is to simulate
the effects of seasonality, on the vectorial capacity and, consequently,
on the disease development. Using entomological information about the
mosquito behavior under different temperatures and rainfall, simulations
are carried out and the repercussions analyzed. The basic reproduction number
of the model is given, as well as a sensitivity analysis of model's parameters.
Finally, an optimal control problem is proposed and solved, illustrating
the difficulty of making a trade-off between reduction of infected individuals
and costs with insecticide.

\smallskip

\noindent \textbf{Keywords:} dengue; vectorial capacity; seasonality;
basic reproduction number; sensitivity analysis; optimal control.

\smallskip

\noindent \textbf{2010 Mathematics Subject Classification:} 34A34; 49J15; 49K15; 92B05.
\end{abstract}


\section{Introduction}
\label{sec:1}

Dengue is currently one of the most important viral diseases transmitted by mosquitoes
to humans in a world context. It is transmitted by  \emph{Aedes aegypti} and
\emph{Aedes albopictus} and is usually found in tropical and sub-tropical regions,
but some recent episodes also happened in Europe \cite{Gjenero-Margan2011,Ruche2010,carla2012}.
There are four different serotypes that can cause dengue fever. A human infected by
one serotype, when recovered, has total immunity for that one, and only has partial
and transient immunity for the other three serotypes.

The life cycle of the mosquito has four distinct stages: egg, larva, pupa and adult.
The first three stages take place in water, while air is the medium
for the adult stage. In urban areas, \emph{Aedes aegypti} breeds on water
collections. With increasing urbanization and crowded cities,
environmental conditions foster the spread of the disease that,
even in the absence of fatal forms, breed significant economic and
social costs (absenteeism, immobilization,
debilitation and medication) \cite{Derouich2006}.
Until a vaccine or drug for dengue is available, vector control operations that
eliminate adult mosquitoes and their larvae through breeding-source reduction
remain the only effective method \cite{Who2009}. However, vector control can
be expensive and time consuming, producing a huge economic burden on nations.

Dengue epidemiology is influenced by a complex
set of factors that include rapid urbanization and increase in population
density, capacity of healthcare systems, herd immunity and social behavior of the population.
However, temperature and rainfall are a key environmental determinant in shaping the
landscape of disease. They are critical to mosquito survival, reproduction
and development, and can influence mosquito presence and abundance \cite{CDC}.
Additionally, higher temperatures reduce the time required for the virus
to replicate and disseminate in the mosquito \cite{James2013,LiuHelmersson2014,Vezzani2004}.
Thus, it is important to create distinct simulations to predict
the effects of seasonality on the disease transmission.

The text is organized as follows. In Section~\ref{sec:2}, a mathematical model of the interaction
between humans and mosquitoes is formulated, and the basic reproduction number, $\mathcal{R}_{0}$,
is calculated. A sensitivity analysis of the parameters used is carried out taking into
account $\mathcal{R}_{0}$. Different simulations of the model are shown,
varying the temperatures of the region. In Section~\ref{sec:3}, the mathematical
model is restructured, using a periodic function for the birthrate of the mosquito,
fitting a region with dry and rainy seasons all over a year.
An optimal control problem is proposed in Section~\ref{sec:4},
using the information given in Section~\ref{sec:2}, in order to analyze
different bioeconomic approaches for the dengue disease.
The main conclusions are given in Section~\ref{sec:5}.


\section{The mathematical model}
\label{sec:2}

Taking into account the model presented in
\cite{Dumont2010,Dumont2008} and the considerations
of \cite{Sofia2009,Sofia2010c,Sofia2014}, a mathematical model
is here proposed.
It includes three epidemiological states for humans:
\begin{quote}
\begin{tabular}{lll}
$S_h(t)$ & --- & susceptible (individuals who can contract the disease);\\
$I_h(t)$ & --- & infected (individuals who can transmit the disease); and\\
$R_h(t)$ & --- & resistant (individuals who have been infected and have recovered).
\end{tabular}
\end{quote}
These compartments are mutually exclusive. There are two other state variables, related
to the female mosquitoes (male mosquitos are not considered because they do not bite
humans and consequently do not influence the dynamics of the disease):
\begin{quote}
\begin{tabular}{lll}
$S_m(t)$ & --- & susceptible (mosquitoes that can contract the disease); and\\
$I_m(t)$ & --- & infected (mosquitoes that can transmit the disease).
\end{tabular}
\end{quote}
In order to make a trade-off between simplicity and reality of the epidemiological model,
some assumptions are considered:
\begin{itemize}
\item there is no vertical transmission, that is,
an infected mosquito cannot transmit the disease to their eggs;

\item total human population $N_h$
is constant: $S_h(t)+I_h(t)+R_h(t) = N_h$ at any time $t$;

\item the mosquito population is also constant and proportional to human population,
that is, $S_m(t)+I_m(t)=N_m$, with $N_m= \kappa N_h$ for some constant $\kappa$;

\item the population is homogeneous, which means that every individual
of a compartment is homogeneously mixed with the other individuals;

\item immigration and emigration are not considered during the period under study;

\item homogeneity between host and vector populations, that is,
each vector has an equal probability to bite any host;

\item humans and mosquitoes are assumed to be born susceptible.
\end{itemize}
The system of differential equations is composed by human compartments
\begin{equation}
\label{cap6_ode1}
\begin{cases}
\frac{dS_h(t)}{dt}
= \mu_h N_h - \left(B\beta_{mh}\frac{I_m(t)}{N_h}+\mu_h\right)S_h(t)\\
\displaystyle\frac{dI_h(t)}{dt} = B\beta_{mh}\frac{I_m(t)}{N_h}S_h(t) -(\eta_h+\mu_h) I_h(t)\\
\displaystyle\frac{dR_h(t)}{dt} = \eta_h I_h(t) - \mu_h R_h(t)
\end{cases}
\end{equation}
coupled with mosquito compartments
\begin{equation}
\label{cap6_ode2}
\begin{cases}
\frac{dS_m(t)}{dt} = \mu_m N_m
-\left(B \beta_{hm}\frac{I_h(t)}{N_h}+\mu_m \right) S_m(t)\\
\displaystyle\frac{dI_m(t)}{dt} = B \beta_{hm}\frac{I_h(t)}{N_h}S_m(t)
-\mu_m I_m(t)
\end{cases}
\end{equation}
and subject to initial conditions
\begin{equation}
\label{initial_conditions}
\begin{gathered}
S_h(0)=S_{h0}, \quad  I_h(0)=I_{h0}, \quad R_h(0)=R_{h0},\\
S_{m}(0)=S_{m0}, \quad I_m(0)=I_{m0}.
\end{gathered}
\end{equation}


\subsection{Scenarios with temperature variation}

The dengue epidemic model makes use of the parameters described in Table~\ref{table_1}.
In this study, three simulations were considered, related to distinct vectorial capacity.
Temperature affects the behavior of vector: its population, biting rate, biting capacity,
incubation time, daily survival probability or mortality rate, and eggs hatching rate \cite{Watts1987}.
It is generally assumed that higher mean temperatures facilitate dengue transmission
because of faster virus propagation and dissemination within the vector. Vector competence,
the probability of a mosquito becoming infected and subsequently transmitting virus
after ingestion of an infectious blood meal, is generally positively associated with
temperature \cite{Carrington2013}. We only assume differences on transmission capacities
and mosquito lifespan.

The different values presented for Scenarios 1 and 2 are based
on \cite{LiuHelmersson2014}. The first scenario is concerned with a region where
the mean temperature is $14^{\circ}$C. The second one is related to a region where the mean
temperature is $26^{\circ}$C.
The third scenario is created to simulate mild climate. The authors had previously analyzed the outbreak
that occurred in Madeira island in October 2012, which has a mean temperature between $18^{\circ}$C and $24^{\circ}$C,
all over the year. The values used in this last scenario are based on \cite{Sofia2014}.
\begin{table}[ptbh]
\begin{center}
\caption{Parameters in the epidemiological model \eqref{cap6_ode1}--\eqref{cap6_ode2}.}
\label{table_1}
\scriptsize{
\begin{tabular}{lllllll}
\hline
Para-\- & Description & Range of values  & Value & Value  & Value & Source\\
meter& & in literature & Scenario 1 & Scenario 2 & Scenario 3 &\\
\hline
$N_h $ & Total population & & 112000 & 112000 & 112000 &\cite{censos2011}\\
$N_m $ & Total mosquito population & & $3\times N_h$ &  $3\times N_h$ & $3\times N_h$ &\cite{censos2011}\\
$B$ & Average daily biting (per day)& & 1/3 & 1/3 & 1/3 &\cite{Focks2000}\\
$\beta_{mh}$ & Transmission probability &  &  & & \\
 &  from $I_m$ (per bite)& [0.1, 1] & 0.12 & 0.99 & 0.2 &\cite{Focks2000,LiuHelmersson2014} \\
$\beta_{hm}$ & Transmission probability &  &  & & \\
 & from $I_h$ (per bite)& [0.1, 1] & 0.11 & 0.95 & 0.2 & \cite{Focks2000,LiuHelmersson2014} \\
$1/\mu_{h}$ & Average lifespan of humans & &  &  & \\
 &  (in days)& & $\frac{1}{79\times365}$ & $\frac{1}{79\times365}$ & $\frac{1}{79\times365}$ &\cite{censos2011} \\
$1/\eta_{h}$ & Average viremic period (in days)& [1/15, 1/4] & 1/7 & 1/7 & 1/7 &\cite{Chan2012}\\
$1/\mu_{m}$ & Average lifespan of adult & &  &  & \\
 &  mosquitoes (in days)& [1/45, 1/8]& 0.04 & 0.03 & 1/15& \cite{Focks1993,Harrington2001,LiuHelmersson2014,Freitas2007} \\
\hline
\end{tabular}}
\end{center}
\end{table}


\subsection{Stability and sensitivity analysis}

The model \eqref{cap6_ode1}--\eqref{cap6_ode2} has two nonnegative equilibria. Namely,
\begin{itemize}
\item a disease-free equilibrium $(S_{h1}, I_{h1}, R_{h1}, S_{m1}, I_{m1})=(N_h, 0, 0, N_m, 0)$;
\item an endemic equilibrium $(S_{h2}, I_{h2}, R_{h2}, S_{m2}, I_{m2})$ with
\begin{equation*}
\begin{split}
S_{h2}&=\frac{N_h^2 (B \beta_{hm} \mu_h+\mu_m (\eta_h+\mu_h))}
{B \beta_{hm} (B \beta_{mh} N_m+\mu_h N_h)},\\
I_{h2}&=-\frac{\mu_h N_h \left(\mu_m N_h (\eta_h+\mu_h)-B^2 \beta_{hm} \beta_{mh} N_m\right)}
{B \beta_{hm} (\eta_h+\mu_h) (B\beta_{mh} N_m+\mu_h N_h)},\\
R_{h2}&=-\frac{\eta_h N_h \left(\mu_m N_h (\eta_h+\mu_h)-B^2 \beta_{hm} \beta_{mh} N_m\right)}
{B \beta_{hm} (\eta_h+\mu_h) (B\beta_{mh} N_m+\mu_h N_h)},\\
S_{m2}&=\frac{\mu_m (\eta_h+\mu_h) (B \beta_{mh} N_m+\mu_h N_h)}
{B \beta_{mh} (B \beta_{hm} \mu_h+\mu_m (\eta_h+\mu_h))},\\
I_{m2}&=-\frac{\mu_h \left(\mu_m N_h (\eta_h+\mu_h)-B^2 \beta_{hm}\beta_{mh}N_m\right)}
{B \beta_{mh} (B \beta_{hm} \mu_h+\mu_m (\eta_h+\mu_h))}.
\end{split}
\end{equation*}
\end{itemize}
An important measure of transmissibility of the disease is given
by the basic reproduction number. It represents the
expected number of secondary cases produced in a completed
susceptible population, by a typical infected individual during
its entire period of infectiousness \cite{Hethcote2000}.

\begin{theorem}
\label{thm:1}
The basic reproduction number $\mathcal{R}_0$ associated to the
differential system \eqref{cap6_ode1}--\eqref{cap6_ode2} is given by
\begin{equation}
\label{eq:R0}
\mathcal{R}_0 = \left(\frac{B^2 \beta_{hm} \beta_{mh} N_m}{(\eta_h + \mu_h) \mu_m N_h}\right)^{\frac{1}{2}}.
\end{equation}
\end{theorem}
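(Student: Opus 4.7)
The plan is to apply the next-generation matrix method of van den Driessche and Watmough, since the model has two infected compartments ($I_h$ and $I_m$) and the formula for $\mathcal{R}_0$ naturally appears as the spectral radius of a $2\times 2$ matrix, the square root structure being the signature of a host--vector system.

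First, I would identify the infected subsystem, consisting of the equations for $I_h$ and $I_m$ in \eqref{cap6_ode1}--\eqref{cap6_ode2}, and decompose their right-hand sides into new infections $\mathcal{F}$ and remaining transfer terms $\mathcal{V}$:
\begin{equation*}
\mathcal{F} = \begin{pmatrix} B\beta_{mh}\,\dfrac{I_m}{N_h}\, S_h \\[4pt] B\beta_{hm}\,\dfrac{I_h}{N_h}\, S_m \end{pmatrix},
\qquad
\mathcal{V} = \begin{pmatrix} (\eta_h+\mu_h)\, I_h \\[4pt] \mu_m\, I_m \end{pmatrix}.
\end{equation*}
Then I would compute the Jacobians $F = D\mathcal{F}$ and $V = D\mathcal{V}$ evaluated at the disease-free equilibrium $(S_{h1},I_{h1},R_{h1},S_{m1},I_{m1}) = (N_h,0,0,N_m,0)$ already identified earlier in the excerpt. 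Substituting $S_h = N_h$ and $S_m = N_m$ yields an off-diagonal $F$ whose $(1,2)$ entry is $B\beta_{mh}$ and whose $(2,1)$ entry is $B\beta_{hm} N_m/N_h$, together with a diagonal $V = \operatorname{diag}(\eta_h+\mu_h,\mu_m)$, which is trivially invertible.

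Next I would form the next-generation matrix $K = FV^{-1}$, obtaining an antidiagonal matrix; its characteristic polynomial is $\lambda^2 - (\text{product of the two off-diagonal entries})$, so its spectral radius is simply the square root of that product. Reading off, this gives exactly
\begin{equation*}
\mathcal{R}_0 \;=\; \rho(FV^{-1}) \;=\; \left(\frac{B^{2}\,\beta_{hm}\,\beta_{mh}\,N_m}{(\eta_h+\mu_h)\,\mu_m\,N_h}\right)^{1/2},
\end{equation*}
matching \eqref{eq:R0}.

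I do not expect any serious obstacle: the computation is almost mechanical once the $\mathcal{F}$/$\mathcal{V}$ splitting is fixed. The only point requiring a little care is the conceptual one, namely justifying that the terms $\mu_h S_h$-type flows and the recovery dynamics for $R_h$ are correctly placed outside $\mathcal{F}$ (they are not new infections), so that the hypotheses of the van den Driessche--Watmough theorem are satisfied and $\rho(FV^{-1})$ genuinely represents the basic reproduction number rather than a mere algebraic quantity.
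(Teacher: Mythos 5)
Your proposal is correct and follows exactly the route the paper intends: the paper's own ``proof'' is only a citation to a companion work by the same authors, which computes $\mathcal{R}_0$ via the van den Driessche--Watmough next-generation matrix applied to the infected subsystem $(I_h,I_m)$, with the same $\mathcal{F}/\mathcal{V}$ splitting, the same evaluation at the disease-free equilibrium $(N_h,0,0,N_m,0)$, and the same antidiagonal $FV^{-1}$ whose spectral radius yields \eqref{eq:R0}. Your computation checks out, so in effect you have supplied the details the paper omits.
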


\begin{proof}
Similar to the one found in \cite{Sofia2013}.
\end{proof}

If $\mathcal{R}_0 < 1$, then, on average, an infected individual produces
less than one new infected individual over the course of its infectious period,
and the disease cannot grow. Conversely, if $\mathcal{R}_0 > 1$, then each individual
infects more than one person, and the disease invades the population.
Mathematically, $\mathcal{R}_0$ is a threshold for stability
of a disease-free equilibrium and is related to the peak and final size
of an epidemic \cite{Driessche2002}. If $\mathcal{R}_0 < 1$, then
the disease-free equilibrium is stable; otherwise,
if $\mathcal{R}_0 > 1$, then it is unstable.

In determining how best to reduce human mortality and morbidity due to dengue,
it is necessary to know the relative importance of the different factors
responsible for its transmission. The sensitivity indices of $\mathcal{R}_0 $,
related to the parameters in the model, are now calculated.

Sensitivity indices allow us to measure the relative change in a variable when a
parameter changes. The normalized forward sensitivity index of a variable with respect
to a parameter is the ratio of the relative change in the variable to the relative change in the parameter.
When the variable is a differentiable function of the parameter, the sensitivity index may
be defined as follows.

\begin{definition}[See \cite{Chitnis2008}]
The normalized forward sensitivity index of $\mathcal{R}_0$,
which depends differentiably on a parameter $p$, is defined by
\begin{equation}
\label{eq:sens:index}
\Upsilon_p^{\mathcal{R}_0}
=\frac{\partial \mathcal{R}_0}{\partial p}\times \frac{p}{\mathcal{R}_0}.
\end{equation}
\end{definition}
Given the explicit formula \eqref{eq:R0} for the basic reproduction number,
one can easily derive an analytical expression for the sensitivity of  $\mathcal{R}_0$ with
respect to each parameter that comprise it. The obtained values are in Table~\ref{sensitivity},
which presents the sensitivity indices for the baseline parameter values.
Note that the sensitivity index (column~2 of Table~\ref{sensitivity})
may be a complex expression, depending on the different parameters
of the system, but can also be a constant value, not depending on any parameter value.
Column~3 of Table~\ref{sensitivity} presents the values of the indices,
considering the parameter values of Table~\ref{table_1}.
\begin{table}
\begin{center}
\caption{Sensitivity indices \eqref{eq:sens:index} of \eqref{eq:R0}
evaluated at the baseline parameter values in Table~\ref{table_1}.}
\label{sensitivity}
\scriptsize{
\begin{tabular}{lll}
\hline
Parameter & Sensitivity index & Sensitivity index\\
& & for parameter values\\
\hline
$B$ & +1 &+1\\
$\beta_{mh}$ & +0.5 &+0.5 \\
$\beta_{hm}$ & +0.5 &+0.5\\
$\mu_{h}$ & $-\frac{\eta_h}{2 (\eta_h + \mu_h)}$ &-0.00012\\
$\eta_{h}$ &$-\frac{\mu_h}{2 (\eta_h + \mu_h)}$& -0.49988\\
$\mu_{m}$ & -0.5& -0.5 \\
\hline
\end{tabular}}
\end{center}
\end{table}
For example, $\Upsilon_{\beta_{mh}}^{\mathcal{R}_0} \equiv +0.5$ means that
increasing (or decreasing) $\beta_{mh}$ by $10\%$ increases (or decreases) always
$\mathcal{R}_0$ by $5\%$.
A highly sensitive parameter should be carefully estimated, because a small
variation in that parameter will lead to large quantitative changes.
An insensitive parameter, on the other hand, does not require as much effort to estimate,
because a small variation in that parameter will not produce large changes to the quantity
of interest. The results show that big changes in the parameters that affects
the basic reproduction number (except $\mu_h$) produce significant changes in $\mathcal{R}_{0}$,
and consequently, in the behavior of the disease development.
For the three scenarios we study, the basic reproduction number has the values
0.7698, 73.1322 and 0.6221, respectively. This means that if there is no change or
control for the disease, the outbreak will die out in a short period in Scenarios~1
and 3. In contrast, the disease will persist and will become endemic in the region
of Scenario~2.


\subsection{Numerical analysis}
\label{sec:2:1}

The software used in our simulations was \texttt{Matlab} with the routine \texttt{ode45}.
This solver is based on an explicit Runge--Kutta (4,5) formula, the Dormand--Prince pair.
That means the numerical solver \texttt{ode45} combines fourth and fifth order methods, both of
which are similar to the classical fourth order Runge--Kutta method. These vary the step
size, choosing it at each step in an attempt to achieve the desired accuracy.
We examine simulations of system \eqref{cap6_ode1}--\eqref{cap6_ode2},
considering final time $t_f=365$ days with the following initial values \eqref{initial_conditions}
for the differential equations:
\begin{equation*}
\begin{gathered}
S_h(0)=111991, \quad  I_h(0)=9, \quad R_h(0)=0,\\
S_{m}(0)=335000, \quad I_m(0)=1000.
\end{gathered}
\end{equation*}
Figure~\ref{ih_constant} shows the evolution of
infected human in the three scenarios, respectively.
Figure~\ref{ih_constant_high} presents more infected people,
because it corresponds to higher levels of disease
transmissibility due to higher temperatures.
Besides, it is also this scenario that reaches the peak of the disease faster,
while the third simulation has its higher transmission after 200 days.
A situation like this last one allows to have time to prepare the fight of the disease,
in terms of control measures and medical surveillance.

Besides the constraint of temperature effects,
the rainfall factor is also included in next section.
\begin{figure}
\centering
\begin{subfigure}[b]{0.4\textwidth}
\includegraphics[scale=0.45]{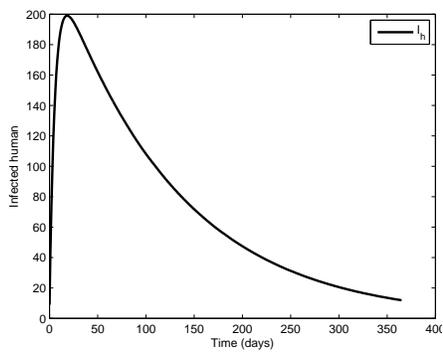}
\caption{Scenario 1}
\label{ih_constant_low}
\end{subfigure}
\begin{subfigure}[b]{0.4\textwidth}
\includegraphics[scale=0.45]{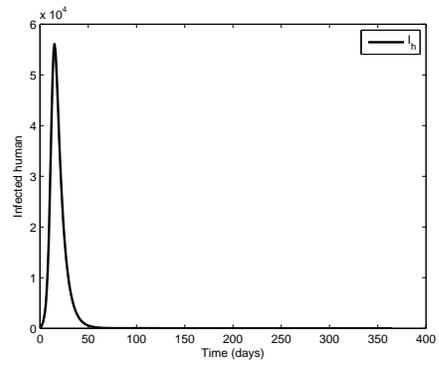}
\caption{Scenario 2}
\label{ih_constant_high}
\end{subfigure}
\begin{subfigure}[b]{0.4\textwidth}
\includegraphics[scale=0.45]{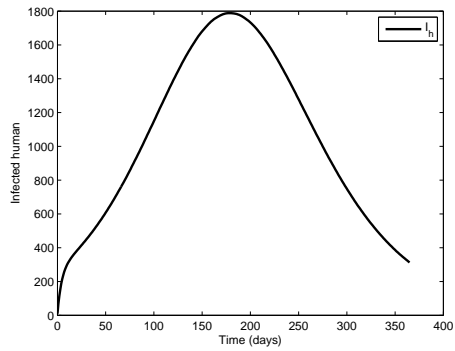}
\caption{Scenario 3}
\label{ih_constant_madeira}
\end{subfigure}
\caption{Infected human.}\label{ih_constant}
\end{figure}


\section{Mathematical model with seasonal variation of mosquito}
\label{sec:3}

In this section, we study the effect of rainfall on the pattern
of mosquito reproduction and hence the number of mosquitoes.
We maintain all the assumptions given before,
except assuming that birth and death rates are equal over time.
The seasonal effect in the modeling of virus transmission
is incorporated, allowing the total number of mosquitoes
to vary periodically with time.

Following \cite{James2013}, we include this seasonal
pattern in system \eqref{cap6_ode1}--\eqref{cap6_ode2}
by changing the birthrate of mosquitoes to a periodic function
\begin{equation*}
\mu_m\left( 1+\alpha \cos \left(\frac{2 \pi t}{365}\right)\right),
\end{equation*}
where $\mu_m$ is the per capita death rate of mosquitoes and $\alpha$ is the amplitude of the
seasonal variation, with $0 < \alpha < 1$. So, the differential equation in \eqref{cap6_ode2}
related to the susceptible mosquitos is transformed into
\begin{equation*}
\displaystyle\frac{dS_m(t)}{dt}
= \mu_m\left( 1+\alpha \cos \left(\frac{2 \pi t}{365}\right)\right)\left(S_m(t)+I_m(t)\right)
-\left(B \beta_{hm}\frac{I_h(t)}{N_h}+\mu_m \right) S_m(t).
\end{equation*}
This reformulated model has only the trivial equilibrium
$(\tilde{S}_{h1}, \tilde{I}_{h1}, \tilde{R}_{h1}, \tilde{S}_{m1}, \tilde{I}_{m1})=(N_h, 0, 0, 0, 0)$.

For numerical experiments, we considered $\alpha=0.3$. In this way,
the mosquito reproduction has its lowest value around 190 days after
the beginning of the year. Again, the routine
\texttt{ode45} of \texttt{Matlab} was used.
Figure~\ref{ih_mix} presents the simulations. The solid line shows
the situation described in Section~\ref{sec:2:1} with all the parameters fixed; in dashed line,
we represent the simulation with the periodic function. In all situations, the simulations with the
seasonal pattern present more infected people. In Scenario~1, the
peak of the disease with the periodic function is reached later,
while in Scenario~3, the situation is reversed.
\begin{figure}
\centering
\begin{subfigure}[b]{0.4\textwidth}
\includegraphics[scale=0.45]{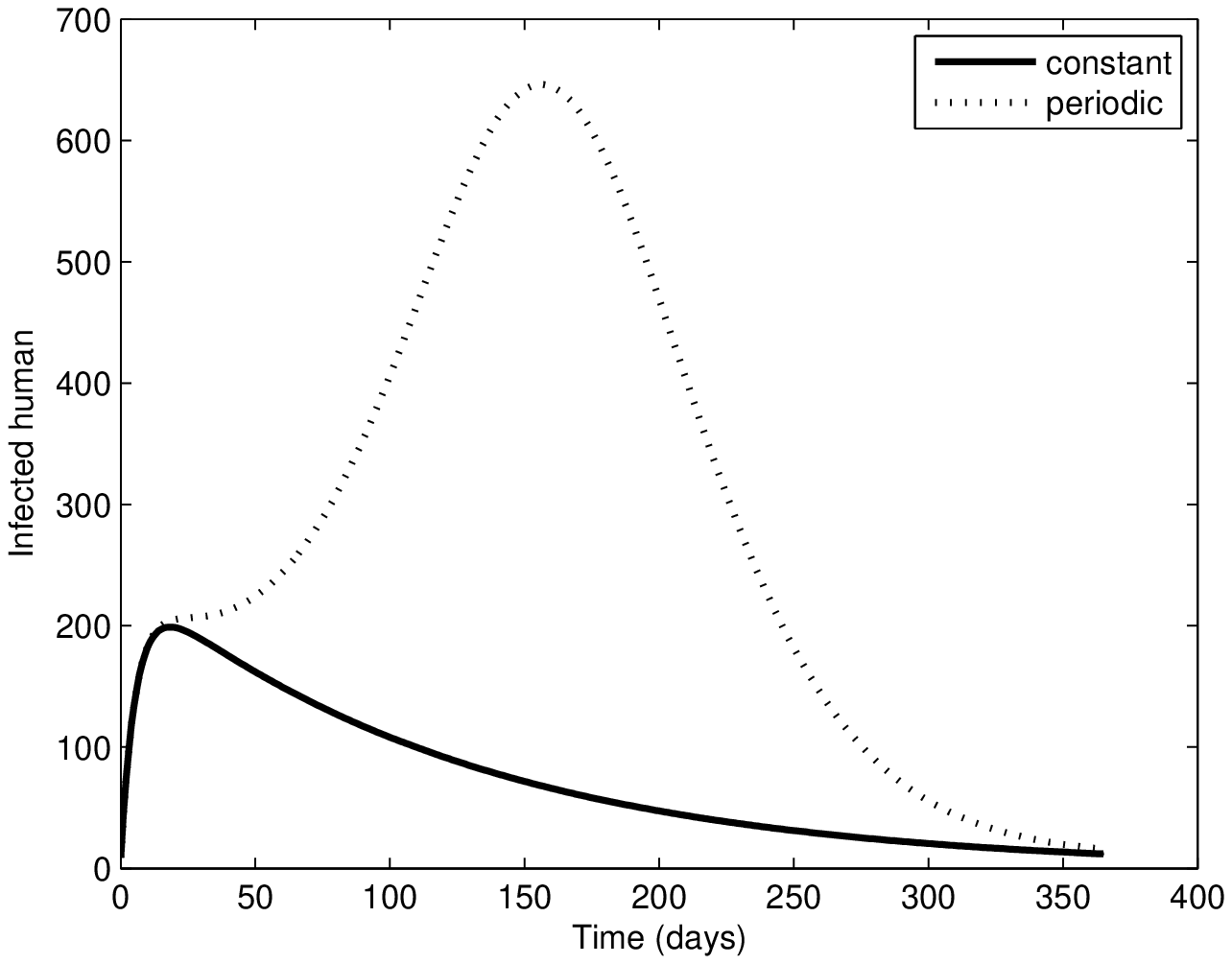}
\caption{Scenario 1}
\label{ih_mix_low}
\end{subfigure}
\begin{subfigure}[b]{0.4\textwidth}
\includegraphics[scale=0.45]{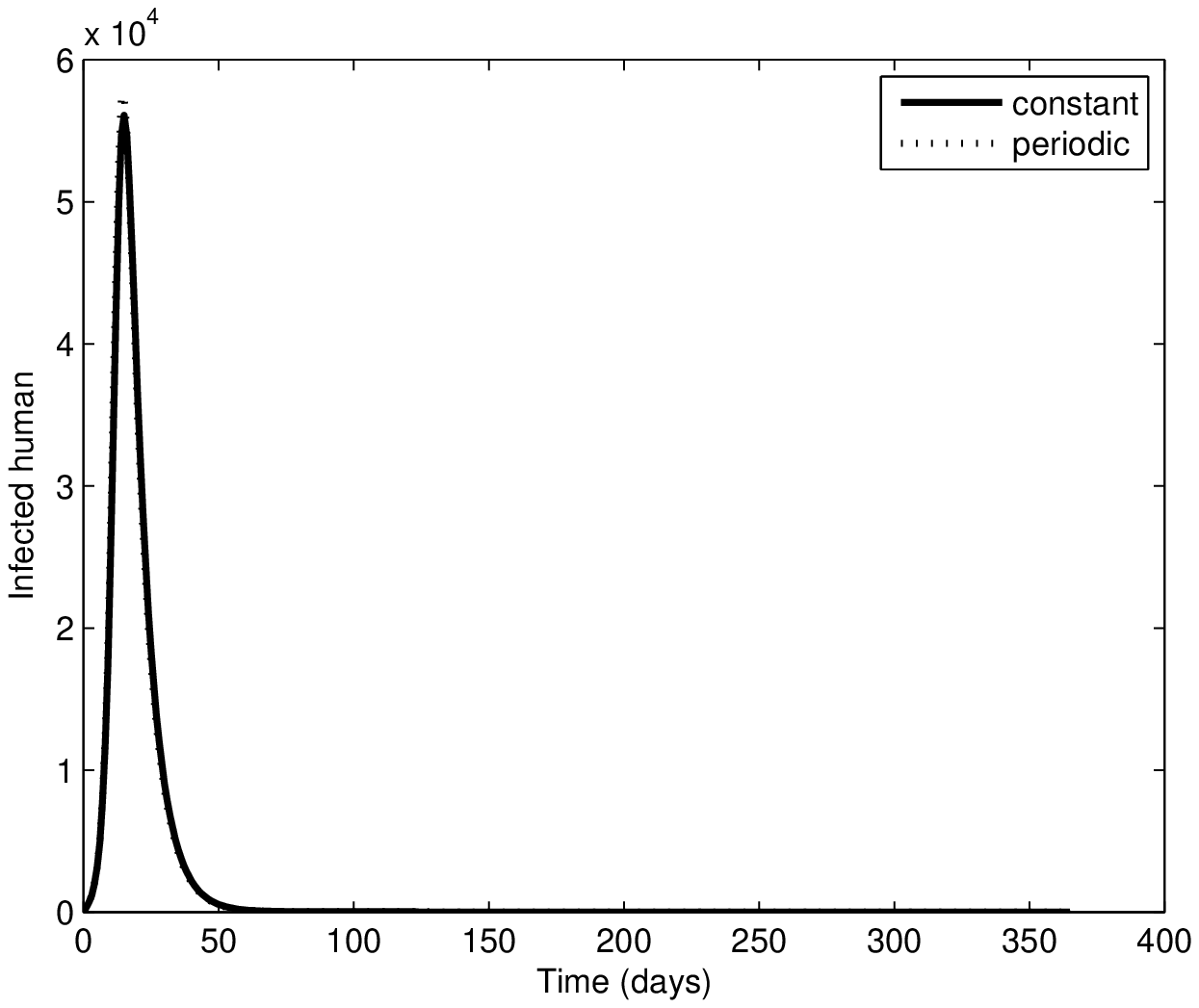}
\caption{Scenario 2}
\label{ih_mix_high}
\end{subfigure}
\begin{subfigure}[b]{0.4\textwidth}
\includegraphics[scale=0.45]{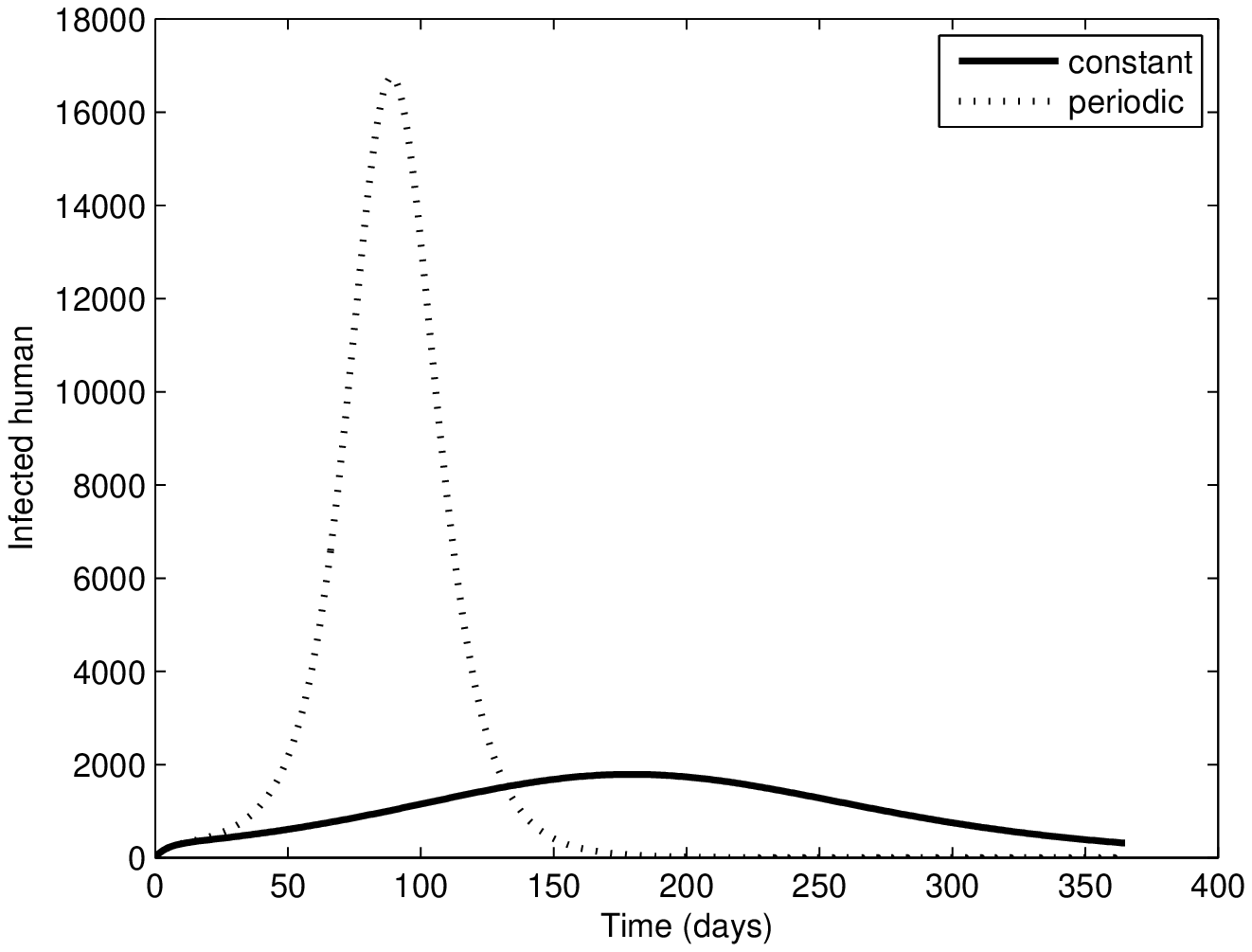}
\caption{Scenario 3}
\label{ih_mix_madeira}
\end{subfigure}
\caption{Infected human using a constant and a periodic mosquito population
(solid and dashed lines, respectively).}\label{ih_mix}
\end{figure}

With the aim of fighting the disease, reducing simultaneously
the costs with infected individuals and the costs of insecticide
campaigns to kill the mosquito, an optimal control problem is presented
and analysed in the next section.


\section{Optimal control problem}
\label{sec:4}

The control strategies for the reduction of infected individuals imply a cost
of implementation. This cost can be modeled through the formulation of an optimal
control problem, mathematically traduced by adding a functional.

Let us consider the previous differential system \eqref{cap6_ode1}--\eqref{cap6_ode2} with
constant mosquito population. To this, we add to the mosquito compartments the control insecticide, $u(t)$,
with $0\leq u(t)\leq 1$.  Thus, the second part of the differential system is rewritten as
\begin{equation}
\label{cap6_ode2_control}
\begin{cases}
\frac{dS_m(t)}{dt} = \mu_m N_m
-\left(B \beta_{hm}\frac{I_h(t)}{N_h}+\mu_m +u(t)\right) S_m(t),\\
\frac{dI_m(t)}{dt} = B \beta_{hm}\frac{I_h(t)}{N_h}S_m(t)
-\left(\mu_m+u(t)\right) I_m(t).
\end{cases}
\end{equation}
Our aim is to minimize the number of infected individuals, $I_h$, while keeping
the cost of control strategy implementation low, that is, we want to minimize
\begin{equation}
\label{functional}
\displaystyle C\left(u\right)=\int_{0}^{t_f}\left[\gamma_D I_h(t)
+\gamma_S u^2(t)\right]dt,
\end{equation}
where the coefficients $\gamma_D$ and $\gamma_S$ represent the balancing cost factors for
infected individuals and spraying campaigns, respectively. More precisely,
the optimal control problem consists in finding a control $u^{*}$ such that the associated
state trajectory $\left(S_h^*, I_h^*, R_h^*, S_m^*, I_m^*\right)$ is solution of the
control system \eqref{cap6_ode1} and \eqref{cap6_ode2_control} in the interval $[0, t_f]$
with the initial conditions \eqref{initial_conditions} and minimizing the cost functional $C$:
\begin{equation}
\label{functional_minimization}
C(u^*)=\min_{u \in \Omega} C(u),
\end{equation}
where $\Omega$ is the set of admissible controls given by
$\Omega = \{ u \in L^1(0,t_f) \, | \, 0\leq u(t) \leq 1\}$.
The existence of the optimal control $u^*(\cdot)\in \Omega$
comes from the convexity of the cost functional
\eqref{functional} with respect to the control and the
regularity of the system \eqref{cap6_ode1} and \eqref{cap6_ode2_control}
(see, e.g., \cite{Cesari1983} for existence results of optimal solutions).
According to the Pontryagin minimum principle \cite{Pontryagin1962},
if $u^*(\cdot)$  is optimal for the problem \eqref{functional_minimization}, \eqref{cap6_ode1} and \eqref{cap6_ode2_control}
with the initial conditions given by \eqref{initial_conditions} and fixed final time $t_f$,
then there exists an adjoint vector, $\lambda=\left(\lambda_1(t),\lambda_2(t),
\lambda_3(t),\lambda_4(t),\lambda_5(t) \right)$, such that
\begin{equation*}
\dot{S}_h=\frac{\partial H}{\partial \lambda_1},
\quad \dot{I}_h=\frac{\partial H}{\partial \lambda_2},
\quad \dot{R}_h=\frac{\partial H}{\partial \lambda_3},
\quad \dot{S}_m=\frac{\partial H}{\partial \lambda_4},
\quad \dot{I}_m=\frac{\partial H}{\partial \lambda_5}
\end{equation*}
and
\begin{equation*}
\dot{\lambda}_1=-\frac{\partial H}{\partial S_h},
\quad \dot{\lambda}_2=-\frac{\partial H}{\partial I_h},
\quad \dot{\lambda}_3=-\frac{\partial H}{\partial R_h},
\quad \dot{\lambda}_4=-\frac{\partial H}{\partial S_m},
\quad \dot{\lambda}_5=-\frac{\partial H}{\partial I_m},
\end{equation*}
where function $H$, called the Hamiltonian, is defined by
\begin{equation*}
\begin{split}
H = & \gamma_D I_h+\gamma_S u^2\\
& + \lambda_1 \left( \mu_h N_h - \left(B\beta_{mh}\frac{I_m}{N_h}+\mu_h\right)S_h\right)\\
& + \lambda_2 \left( B\beta_{mh}\frac{I_m}{N_h}S_h -(\eta_h+\mu_h) I_h\right)\\
& + \lambda_3 \left( \eta_h I_h - \mu_h R_h\right)\\
& + \lambda_4 \left( \mu_m N_m -\left(B \beta_{hm}\frac{I_h}{N_h}+\mu_m \right) S_m\right)\\
& + \lambda_5 \left(  B \beta_{hm}\frac{I_h}{N_h}S_m -\mu_m I_m\right).
\end{split}
\end{equation*}
Moreover, the minimality condition
\begin{multline*}
H\left(S_h^*(t), I_h^*(t), R_h^*(t), S_m^*(t), I_m^*(t), \lambda(t),u^*(t)\right)\\
= \min_{0\leq u \leq 1} H \left(S_h^*(t), I_h^*(t), R_h^*(t), S_m^*(t), I_m^*(t), \lambda(t),u\right)
\end{multline*}
holds almost everywhere on $[0, t_f]$ together with the transversality conditions
\begin{equation*}
\lambda_i(t_f)=0, \quad i=1,\ldots,5.
\end{equation*}

\begin{theorem}
The optimal control problem \eqref{functional_minimization}, \eqref{cap6_ode1},
\eqref{cap6_ode2_control} with fixed initial conditions
\eqref{initial_conditions} and fixed final time $t_f$
admits an unique solution $\left(S_h^*, I_h^*, R_h^*, S_m^*, I_m^*\right)$ associated
to an optimal control $u^{*}(\cdot)$ on $[0,t_f]$. Moreover, there exists adjoint functions
$\lambda_1^{*}(\cdot)$, $\lambda_2^{*}(\cdot)$, $\lambda_3^{*}(\cdot)$, $\lambda_4^{*}(\cdot)$ and
$\lambda_5^{*}(\cdot)$ such that
\begin{equation}
\label{adjoint_equations}
\begin{cases}
\dot{\lambda}_1^{*}(t)=(\lambda_1^{*}(t)-\lambda_2^{*}(t))B \beta_{mh}I_m^*(t) +\lambda_1^*(t)\mu_m\\
\dot{\lambda}_2^{*}(t)=(\lambda_4^{*}(t)-\lambda_5^{*}(t))B \beta_{hm}I_h^*(t) + \lambda_2^*(t)(\eta_h+\mu_h)\\
\dot{\lambda}_3^{*}(t)=\lambda_3^*(t)\mu_h\\
\dot{\lambda}_4^{*}(t)=(\lambda_4^{*}(t)-\lambda_5^{*}(t))B \beta_{hm}I_h^*(t)+\lambda_4^*(t)(\mu_m+u^*(t))\\
\dot{\lambda}_5^{*}(t)=(\lambda_1^{*}(t)-\lambda_2^{*}(t))B \beta_{mh}S_h^*(t)+\lambda_5^*(\mu_m+u^*(t))
\end{cases}
\end{equation}
with transversality conditions
\begin{equation}
\label{tranversality}
\lambda_i^{*}(t_f)=0, \quad i=1,\ldots 5.
\end{equation}
Furthermore,
\begin{equation}
\label{optimal_control}
u^{*}(t)= \min \left\{\max\left\{0,\frac{\lambda_4^*(t) S_m^*(t)+\lambda_5^*(t) I_m^*(t)}{2 \gamma_S}\right\},1\right\}.
\end{equation}
\end{theorem}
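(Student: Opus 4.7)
The plan is a two-stage argument. First, I would establish existence of an optimal pair via a classical variational theorem; then I would derive the stated characterization via Pontryagin's minimum principle, whose statement has already been applied informally in the paragraph preceding the theorem. Uniqueness will be the delicate point.

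For existence, I would appeal to the theorem of Cesari \cite{Cesari1983} cited just before the statement. The hypotheses to verify are routine: the admissible set $\Omega = \{u \in L^1(0,t_f) : 0 \le u(t) \le 1\}$ is nonempty, closed and bounded; the right-hand side of the control system \eqref{cap6_ode1}, \eqref{cap6_ode2_control} is $C^1$ in the state and affine in $u$; the state trajectories remain in the compact invariant region defined by the conservation laws $S_h+I_h+R_h=N_h$ and $S_m+I_m=N_m$ with nonnegative components; and the integrand $\gamma_D I_h + \gamma_S u^2$ is convex in $u$ and bounded below. These suffice to produce an optimal pair $(x^*,u^*)$.

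Next, applying Pontryagin's minimum principle to this $(x^*,u^*)$ furnishes an absolutely continuous adjoint $\lambda^* = (\lambda_1^*,\dots,\lambda_5^*)$ satisfying $\dot\lambda_i^* = -\partial H/\partial x_i$ evaluated along the optimum, where $H$ is the Hamiltonian displayed in the excerpt. Reading off these five partial derivatives from the explicit form of $H$ yields the adjoint system \eqref{adjoint_equations} term by term, and the absence of a terminal payoff produces the transversality conditions \eqref{tranversality}. For the control characterization, I observe that $u \mapsto H$ is a strictly convex quadratic in $u$ with leading coefficient $\gamma_S > 0$ and linear part $-u\bigl(\lambda_4^* S_m^* + \lambda_5^* I_m^*\bigr)$; setting $\partial H/\partial u = 0$ gives the interior critical point $\bar u = (\lambda_4^* S_m^* + \lambda_5^* I_m^*)/(2\gamma_S)$, and projecting this value onto $[0,1]$ to enforce the control constraint produces exactly the formula \eqref{optimal_control}.

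The main obstacle is uniqueness. The standard route is to set up the fixed-point map $\Phi : L^\infty(0,t_f) \to L^\infty(0,t_f)$ that sends a candidate control $u$ to the control obtained by: integrating the state system \eqref{cap6_ode1}, \eqref{cap6_ode2_control} forward with initial data \eqref{initial_conditions}; then integrating the adjoint system \eqref{adjoint_equations} backward with terminal data \eqref{tranversality}; and finally recomputing $u$ by \eqref{optimal_control}. Since the states remain in the compact invariant region and the adjoints are bounded on $[0,t_f]$ by a Gr\"onwall estimate, the Lipschitz constant of $\Phi$ is controlled by a continuous function of $t_f$ vanishing at $0$, and one obtains contraction for $t_f$ sufficiently small, whence uniqueness of the optimal triple $(x^*, u^*, \lambda^*)$. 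For general $t_f$ this is usually inherited by concatenation over small subintervals, though it is common in the literature for this step to be stated rather than carried out in full, and I would expect the authors to do the same.
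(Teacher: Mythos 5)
Your proposal is correct and follows essentially the same route as the paper, which gives no argument of its own but simply defers to \cite{Silva2012}, where precisely this combination is carried out: existence via Cesari's theorem using boundedness of the invariant region and convexity of the integrand in $u$, the adjoint system and the projection formula for $u^*$ from the Pontryagin minimum principle, and uniqueness of the optimality system by a contraction (Gr\"{o}nwall-type) estimate valid for $t_f$ sufficiently small. One caveat worth recording: if you actually compute $-\partial H/\partial x_i$ from the (control-corrected) Hamiltonian as you describe, you obtain $\dot{\lambda}_1^{*}=(\lambda_1^{*}-\lambda_2^{*})B\beta_{mh}I_m^{*}/N_h+\lambda_1^{*}\mu_h$ and $\dot{\lambda}_2^{*}=-\gamma_D+(\lambda_4^{*}-\lambda_5^{*})B\beta_{hm}S_m^{*}/N_h-\lambda_3^{*}\eta_h+\lambda_2^{*}(\eta_h+\mu_h)$, together with $1/N_h$ factors in the last two equations, so the system \eqref{adjoint_equations} as printed does not match the term-by-term derivation you invoke --- this is a defect of the displayed formulas rather than of your method, but your claim that the computation ``yields the adjoint system \eqref{adjoint_equations}'' should be amended accordingly.
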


\begin{proof}
Similar to the one found in \cite{Silva2012}.
\end{proof}

For the optimal control problem, we used the parameter values of Scenario~2,
because it is the case more threatening for public health and, therefore, the one that all efforts
must be invested. For the first two graphics (Figures~\ref{infected_oc} and \ref{control_oc}),
both balancing costs, $\gamma_D$ and $\gamma_S$, assume the value one.
The problem was solved in \texttt{Matlab}, using the forward-backward sweep method \cite{Lenhart2007}.
The process begins with an initial guess on the control variable. Then, the state
equations are simultaneously solved forward in time and the adjoint
equations are solved backward in time.
The control is updated by inserting the new values of states and
adjoints into its characterization, and the
process is repeated until convergence occurs.

We start showing that the implementation of the control has a positive impact
on the reduction of infected individuals. Figure~\ref{infected_oc}
reports that the fraction of infected individuals significantly decreases
when control strategies are implemented. More precisely, with the
control strategy, the infected people is near to zero after 150 days, whereas
without control the outbreak lasts more than one year. To minimize the total
number of infectious, the optimal control $u^{*}$ is applied, which
decreases to the lower bound at the end of the year
(see Figure~\ref{control_oc}).
\begin{figure}[ptbh]
\centering
\includegraphics[scale=0.5]{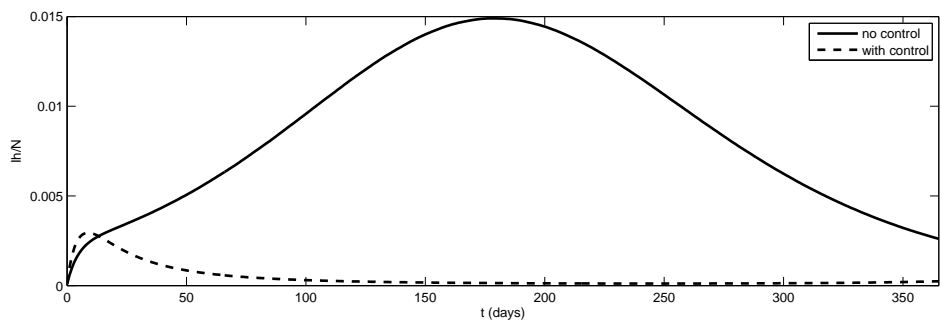}
{\caption{\label{infected_oc} Fraction of infected individuals
$I_h/N_h$ with and without control, where $\gamma_D=\gamma_S=1$.}}
\end{figure}
\begin{figure}[ptbh]
\centering
\includegraphics[scale=0.5]{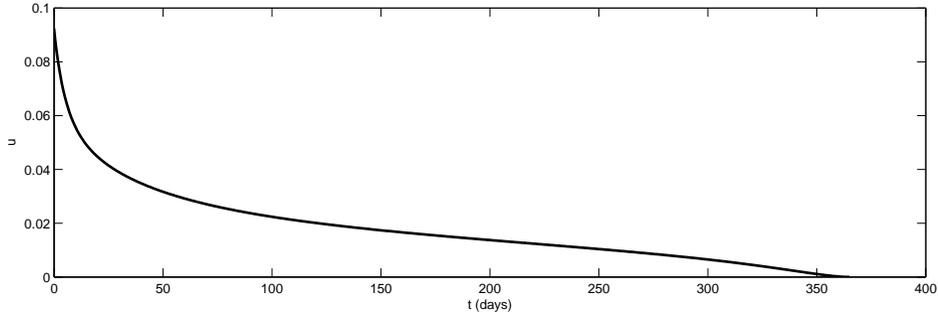}
{\caption{\label{control_oc} Optimal control $u^{*}$ with $\gamma_D=\gamma_S=1$.}}
\end{figure}

The relevance of the optimal control strategies were tested, in the
reduction of the fraction of infected individuals $I_h/N_h$
(Figures~\ref{infected_oc_different_gamma} and \ref{control_oc_different_gamma}).
Three bioeconomic approaches were simulated:
the first one where both human and economic factors are considered
($\gamma_D=\gamma_S=1$); the second one where the human factor is preponderant
($\gamma_D=1$ and $\gamma_S=0$); and the last one, where the main issue is the economic
costs with insecticide ($\gamma_D=0$ and $\gamma_S=1$). As expected, the higher values
for infected individuals are reached in the third bioeconomic approach. The same approach,
presents the optimal curve for $u$ close to zero, because it is expensive to apply insecticide.
\begin{figure}[ptbh]
\centering
\includegraphics[scale=0.5]{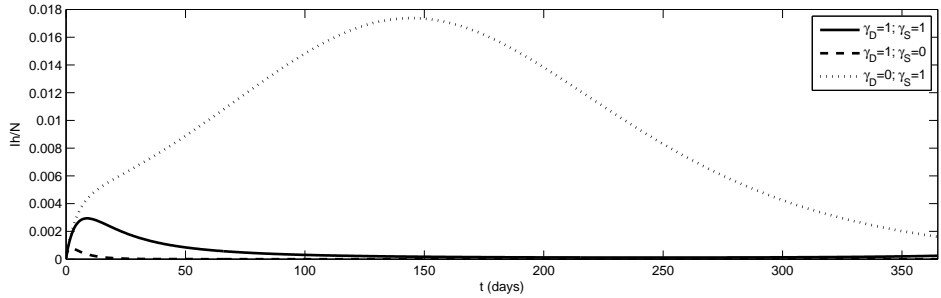}
{\caption{\label{infected_oc_different_gamma}
Fraction of infected individuals $I_h/N_h$ with distinct bioeconomic approaches.}}
\end{figure}
\begin{figure}[ptbh]
\centering
\includegraphics[scale=0.5]{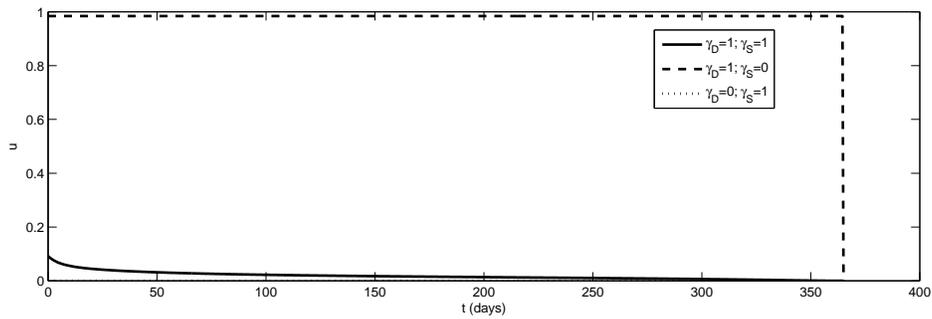}
{\caption{\label{control_oc_different_gamma} Optimal control $u^{*}$
with distinct bioeconomic approaches.}}
\end{figure}


\section{Conclusion}
\label{sec:5}

Temperature and rainfall can be either an effective barrier or a facilitator
of vector-borne diseases. The ambient temperature increased over the last few decades,
and may contribute to the drastic increase of dengue cases.
In this paper, we showed that small changes in the parameters of the model,
related to vectorial competence, can provoke great changes in the study of
dengue disease. In most regions where the disease is present, there is at least
two seasons, with distinct temperature and humidity. In this way,
a periodic function that allows to fit the mosquito population along the year
can be an interesting tool to the design of mathematical models.

Epidemiological modeling has largely focused on identifying the mechanisms responsible for epidemics
but has taken little account on economic constraints in analyzing control strategies. Economic models
have given insight into optimal control under constraints imposed by limited resources, but they frequently
ignore the spatial and temporal dynamics of the disease. Nowadays, the combination of epidemiological
and economic factors is essential. Therefore, we varied the cost functional,
giving a different answer depending on the main goal to reach,
thinking in economical or human-centered perspectives.


\section*{Acknowledgements}

This work was supported by Portuguese funds through
\emph{The Portuguese Foundation for Science and Technology} (FCT).
Rodrigues and Torres are supported by the
\emph{Center for Research and Development in Mathematics and Applications}
(CIDMA) within project PEst-OE/MAT/UI4106/2014;
Monteiro is supported by the ALGORITMI R\&D Center and project
PEST-OE/EEI/UI0319/2014.


\small



\end{document}